\newtheorem{teo}[equation]{Theorem}
\newtheorem{lema}[equation]{Lemma}
\newtheorem{propo}[equation]{Proposition}
\newtheorem{remark}[equation]{Remark}
\numberwithin{equation}{section}
\begin{document}

\title[Multipliers for Hardy spaces ]
{Remarks on spectral multiplier theorems\\
 on Hardy spaces\\
associated with semigroups of operators}

\subjclass[2000]{42B30,  (primary), 42B35, 42B15 (secondary)}
\keywords{}

\begin{abstract} Let $L$ be a non-negative, self-adjoint operator on $L^2(\Omega)$,
where $(\Omega, d, \mu)$ is a space of homogeneous type. Assume
that  the semigroup $\{T_t\}_{t>0}$ generated by $-L$ satisfies
Gaussian bounds, or more generally Davies-Gaffney estimates. We
say that $f$ belongs to the Hardy space $H^1_L$ if the square
function
$$S_h f(x)=\left(\iint_{\Gamma (x)}|t^2Le^{-t^2L}f(y)|^2
\frac{d\mu(y)}{\mu (B_d(x,t))}\frac{dt}{t}\right)^{1\slash 2}$$
 belongs to
$L^1(\Omega, \, d\mu)$, where $\Gamma (x)=\{(y,t)\in \Omega \times
(0,\infty): \ d(x,y)<t\}$. We prove  spectral multiplier theorems
for $L$ on $H^1_L$.

\end{abstract}

\author[Jacek Dziuba\'nski ]{ Jacek Dziuba\'nski }
\address{Instytut Matematyczny\\
Uniwersytet Wroc\l awski\\
50-384 Wroc\l aw \\
Pl. Grunwal\-dzki 2/4\\
Poland} \email{jdziuban@math.uni.wroc.pl,
preisner@math.uni.wroc.pl}

\author[Marcin Preisner ]{ Marcin Preisner}

\thanks{
 Supported by the European Commission Marie Curie Host Fellowship
for the Transfer of Knowledge "Harmonic Analysis, Nonlinear
Analysis and Probability" MTKD-CT-2004-013389 and by Polish
Government funds for science - grant N N201 397137, MNiSW}

%\date{\today}
\maketitle

\section{Introduction.}

 A classical H\"ormander multiplier theorem \cite{Horm} asserts that
if $m$ is a bounded function on $\mathbb R^d$ such that for some
$\beta >d\slash 2$ and any
 radial function $\eta\in C_c^\infty$, $\text{supp}\, \eta \subset
 \{\xi \in \mathbb R^d: \ 2^{-1}\leq |\xi |\leq 2\}$, one has
 $$\sup_{t>0} \| \eta (\, \cdot \, )m(t\, \cdot \, )\|_{W^{2,\beta}(\mathbb R^d)}
 \leq C_\eta,$$
 where $\| \ \cdot \ \|_{W^{2,\beta}(\mathbb R^d)}$ is the standard Sobolev
 norm on $\mathbb R^d$, then the multiplier operator
 $f\mapsto \mathcal F^{-1}(m\mathcal F
 f)$, initially defined on $L^p(\mathbb R^d)\cap L^2(\mathbb R^d)$,
 is bounded on $L^p(\mathbb R^d)$ for  $1<p<\infty$, and is of weak-type
 (1,1). Here $\mathcal F$ denotes the Fourier transform.

Let $(\Omega, \, d(x,y))$ be a metric space equipped with a
positive measure $\mu$. We assume that $(\Omega, \, d, \, \mu)$ is
a space of homogeneous type in the sense of Coifman-Weiss
\cite{CW}, that is, there exists a constant $C>0$ such that
 \begin{equation}\label{doubling}
 \mu(B_d(x,2t))\leq C\mu (B_d(x,t)) \ \ \
 \text{for every} \ x\in\Omega, \ t>0,
 \end{equation}
 where $B_d(x, t)=\{ y\in\Omega: \ d(x,y)<t\}$.
  The condition (\ref{doubling}) implies that there exist  constants
   $C>0$ and $q>0$ such that
  \begin{equation}\label{growth}
 \mu(B_d(x,st))\leq C_0 s^q \mu(B_d(x,t)) \ \ \ \text{for every }
 x\in \Omega, \ t>0, \ s>1.
 \end{equation}
 Of course we wish  to get $q$ as small as
 possible even at the expense of large $C_0$.

   Let   $\{T_t\}_{t>0}$ be a semigroup of linear operators on $L^2(\Omega, \, d\mu)$
   generated by  $-L$, where $L$ is a non-negative, self-adjoint
    operator which is injective on its domain.
  Assume  the operators $T_t$ have
     the  following form
   \begin{equation}\label{sem}
 T_tf(x)=\int_\Omega T_t(x,y)f(y)d\mu (y),
   \end{equation}
   where  the kernels $T_t(x,y)$ satisfy  Gaussian bounds,
   that is, there exist constants $C_0$, $c_0>0$ such that for
   every $x,y\in\Omega$, $t>0$, we have
   \begin{equation}\label{G1}
   |T_t(x,y)|\leq \frac{C_0}{V(x,\sqrt{t})}
   \exp\left(-\frac{d(x,y)^2}{c_0t}\right),
   \end{equation}
 where  here and subsequently $V(x,t)=\mu (B_d(x,t))$.
 The estimate (\ref{G1}) implies that for every $k\in\mathbb N$
 there exist  constants  $C_k,c_k>0$ such that
 \begin{equation}\label{G2}
   \left|\frac{\partial^k}{\partial t^k}T_t(x,y)\right|
   \leq \frac{C_k}{t^k V(x,\sqrt{t})}
   \exp\left(-\frac{d(x,y)^2}{c_k t}\right) \ \ \text{  for } \
 x,y\in\Omega, \ t>0.
   \end{equation}
   The constants $C_k, \ c_k$  in (\ref{G2}) depend only on $k$ and the constants
   $C, C_0, q,c$ in (\ref{growth}) and (\ref{G1}).

 For a suitable function $f$ (e.g., from $L^2(\Omega)$) we consider the
 square  function $S_hf$ associated with $L$  defined by
\begin{equation}\label{square}
 S_hf(x)=\left(\iint_{\Gamma (x)}|t^2L T_{t^2}
 f(y)|^2\frac{d\mu(y)}{V(x,t)}\frac{dt}{t}\right)^{1\slash 2},
 \end{equation}
where $\Gamma (x)=\{(y,t)\in\Omega \times (0,\infty):\ d(x,y)\leq
t\}$.

Following \cite{ADM},  \cite{AMR},  \cite{Hof} (see also
\cite{BZ}, \cite{DZ00}) we define the Hardy space $H^1_L=H^1_{L,
S_h}(\Omega)$ as the completion of $\{f\in L^2(\Omega):\ \|
S_hf\|_{L^1(\Omega)}<\infty\}$ in the norm $\| f\|_{H^1_L}=\|S_h
f\|_{ L^1(\Omega)}$.

 It was proved in Hofmann, Lu, Mitrea, Mitrea, Yan \cite{Hof} that
   the space $H^1_L$, where $-L$ generates a semigroup having Gaussian bounds,
    admits the following atomic
 decomposition.

  Let $M\geq 1$, $M\in \mathbb N$. A function $a$ is a
  $(1,2,M)$-atom for
  $H^1_L$  if there exist a ball
  $B=B_d(y_0,r)=\{ y\in \Omega: \, d(y,y_0)<r\}$ and a function
  $b\in \mathcal D(L^M)$  such that
  \begin{equation}\label{A1} a=L^Mb;\end{equation}
  \begin{equation}\label{A2} \text{supp}\, L^k b \subset B,  \ \
  k=0,1,...,M ; \end{equation}
  \begin{equation}\label{A3} \| (r^2L)^kb\|_{L^2(\Omega)}
  \leq r^{2M}\mu(B)^{-1\slash 2}, \ \ k=0,1,...,M.
  \end{equation}
The atomic norm $\| f\|_{H^1_{L}\text{-\rm atom}}$ is defined by
$$ \| f\|_{H^1_L\text{-\rm atom}}=\inf \sum_j|\lambda_j|,$$
where the infimum is taken over all representation
$f=\sum_j\lambda_j a_j$, where $a_j$ are $(1,2,M)$-atoms for
$H^1_{L}$, $\lambda_j\in \mathbb C$. Theorem 7.1 of \cite{Hof}
asserts that there exists a constant $C>0$ such that
\begin{equation}\label{dz}
 C^{-1} \| f\|_{H^1_L}\leq  \| f\|_{H^1_L\text{-\rm atom}}\leq C\|
 f\|_{H^1_L}.
\end{equation}

 Let
 \begin{equation}\label{res}
 Lf=\int_0^\infty \lambda \, dE_L(\lambda)f
 \end{equation}
 be the spectral resolution of $L$.

 Our first  goal in  this paper is to present a simple proof of the following spectral
 multiplier theorem.

 \begin{teo}\label{maintheorem}
 Let $m$ be  a bounded function defined on $(0,\infty)$ such
that for  some real number $\alpha > q\slash 2$ and any nonzero
function $\eta\in C_c^\infty (2^{-1}, 2)$ there exists a constant
$C_\eta$ such that
\begin{equation}\label{condition}
\sup_{t>0} \| \eta (\, \cdot \,) m(t\,\cdot \,)\|_{W^{\infty
,\alpha}(\mathbb R)}\leq C_\eta,
\end{equation}
where $\| F \|_{W^{p,\alpha}(\mathbb R)}=\| (I-d^2\slash
dx^2)^{\alpha \slash 2}F\|_{L^p(\mathbb R)}$.
  Then the spectral multiplier operator
\begin{equation}\label{mult_op}
 m(L) = \int_0^\infty m(\lambda) dE_L(\lambda ),
 \end{equation}
 maps $(1,2,1)$-atoms for $H^1_L$ into $H^1_L$. Moreover, there exists a constant $C>0$ such that
 \begin{equation}\label{eqqq1}
 \| m(L)a\|_{H^1_L}\leq C \ \  \ \text{for every} \
 (1,2,1)\text{-atom}.
 \end{equation}
\end{teo}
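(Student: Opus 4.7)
The strategy is to present $m(L)a$ as a fixed-constant multiple of a molecule for $H^1_L$, and then invoke the equivalence (\ref{dz}) between molecules and atoms.

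Let $a = Lb$ be a $(1,2,1)$-atom supported in $B = B(y_0,r)$, and set $c = m(L)b$. Since $L$ commutes with $m(L)$, one has $m(L)a = Lc$. I would verify that $c$ satisfies the molecular decay
\[
\|(r^2L)^j c\|_{L^2(U_k)} \leq C r^2 \mu(2^kB)^{-1/2}\, 2^{-k\delta}, \qquad k\geq 0,\ j=0,1,
\]
where $U_0 = 4B$ and $U_k = 2^{k+2}B\setminus 2^{k+1}B$ for $k\geq 1$, with $\delta = \alpha - q/2 > 0$. Standard molecule-to-atom arguments (cf.\ \cite{Hof}) then yield $\|m(L)a\|_{H^1_L}\leq C$. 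The case $k=0$ is immediate from $\|c\|_2 \leq \|m\|_\infty\|b\|_2 \leq Cr^2\mu(B)^{-1/2}$ and $\|r^2 Lc\|_2 = r^2\|m(L)a\|_2 \leq Cr^2\mu(B)^{-1/2}$. For $k\geq 1$ both estimates reduce to the single off-diagonal bound
\[
\|\chi_{U_k} m(L) \chi_B\|_{L^2\to L^2} \leq C\, 2^{-k(\alpha-q/2)},
\]
which we apply to $b$ when $j=0$ and to $a$ when $j=1$.

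To establish this off-diagonal bound, I would dyadically decompose $m = \sum_{j\in\mathbb{Z}} m_j$ with $m_j(\lambda) = m(\lambda)\psi(2^{-j}\lambda)$ for some $\psi\in C_c^\infty(1/2,2)$ forming a partition of unity on $(0,\infty)$. Hypothesis (\ref{condition}) yields $\|m_j(2^j\,\cdot\,)\|_{W^{\infty,\alpha}(\mathbb R)}\leq C$ uniformly in $j$. Next, write each piece via the Fourier--cosine representation $m_j(L) = \int \widehat{F_j}(\xi)\cos(\xi\sqrt L)\,d\xi$ with $F_j(\xi)=m_j(\xi^2)$. The Gaussian bound (\ref{G1}) implies Davies--Gaffney estimates for $e^{-tL}$, which in turn give finite propagation speed for $\cos(\xi\sqrt L)$: namely, $\chi_{U_k}\cos(\xi\sqrt L)\chi_B = 0$ whenever $|\xi| < d(U_k,B)\asymp 2^k r$. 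Hence one is reduced to integrating $|\widehat{F_j}(\xi)|$ over $|\xi|\gtrsim 2^k r$. Sobolev/Fourier decay of $F_j$ (at scale $2^{j/2}$) combined with the doubling estimate (\ref{growth}) produces $\|\chi_{U_k} m_j(L)\chi_B\|_{L^2\to L^2}\leq C(1+2^{j/2}\cdot 2^k r)^{-\alpha+q/2}$, the $q/2$ loss arising from absorbing volume-doubling factors of order $(2^{j/2}r)^{q/2}$. Summing over $j$, using $\alpha > q/2$ and grouping the low-frequency pieces $j \to -\infty$ into a single bounded operator controlled by the Gaussian kernel, yields the required bound $2^{-k(\alpha-q/2)}$.

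\textbf{Main obstacle.} The crux is the off-diagonal $L^2$-bound with the sharp exponent $\alpha - q/2$. Ensuring that the volume-doubling factors appear precisely so that the Hörmander exponent $\alpha > q/2$ exactly compensates the polynomial growth $\mu(2^k B)^{1/2}\lesssim 2^{kq/2}\mu(B)^{1/2}$ is delicate; it depends critically on the finite propagation speed of $\cos(\xi\sqrt L)$, which is itself a consequence of (\ref{G1}). Handling the low-frequency tail $j\to -\infty$, where the naive per-piece decay is in the wrong direction, is a secondary subtlety that is resolved by treating that part as one operator and exploiting the Gaussian tail of its kernel. Once the off-diagonal bound is in hand, the passage from molecular to atomic norm, and summation over $k$, are essentially bookkeeping.
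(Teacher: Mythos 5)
Your proposal is essentially the paper's proof of Theorem \ref{maintheorem2} (the Davies--Gaffney case), not of Theorem \ref{maintheorem}, and under the hypothesis of Theorem \ref{maintheorem} it does not close. The crux is your claimed off-diagonal bound $\|\chi_{U_k} m(L)\chi_B\|_{L^2\to L^2}\leq C2^{-k(\alpha-q/2)}$. The mechanism you invoke --- Fourier--cosine representation plus finite propagation speed --- reduces matters to $\int_{|\xi|\gtrsim 2^k r}|\widehat{F_j}(\xi)|\,d\xi$, and the only way to control this by a Sobolev norm of order $\alpha$ is Cauchy--Schwarz, which costs an extra half power: one gets $(2^{j/2}2^kr)^{1/2-\alpha}$, not $(2^{j/2}2^kr)^{-\alpha}$. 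After converting $\mu(B)^{-1/2}$ into the molecular normalization $\mu(2^kB)^{-1/2}$ (a loss of $2^{kq/2}$ by (\ref{growth})) and summing over $j$, the surviving decay is $2^{-k(\alpha-1/2-q/2)}$, which is the content of the paper's Lemma \ref{dd1} and Proposition \ref{prop_molec} and which requires $\alpha>(q+1)/2$. Under the assumption $\alpha>q/2$ alone (say $q=1$, $\alpha=0.6$) the exponent is negative and the molecular estimate fails; note also that the dyadic piece with $2^{j/2}\sim(2^kr)^{-1}$ gives no decay in $k$ at all, so your claimed geometric sum in $k$ does not materialize. The hypothesis $W^{\infty,\alpha}$ does not rescue this, since on a fixed compact support it only controls $W^{2,\alpha}$.

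The paper obtains the sharper threshold $\alpha>q/2$ by a different route that uses the Gaussian bounds (\ref{G1}) in an essential way, not merely the finite propagation speed they imply: Hebisch's theorem (Theorem \ref{hebisch}) converts the $L^\infty$-Sobolev condition into weighted $L^1$ kernel estimates $\|\eta(L)(x,y)\|_{\omega(\beta)}\leq C\|\eta\|_{W^{\infty,\alpha}}$ for any $0<\beta<\alpha-q/2$, which after rescaling and dyadic summation yield the kernel bounds of Proposition \ref{Prop1} and Lemma \ref{mainlemma} for $\Phi^{\langle N\rangle}_t(\lambda)=(t^2\lambda)^Ne^{-t^2\lambda}m(\lambda)$. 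The square function $S_h(m(L)a)$ is then estimated directly: trivially on $2B$ by $L^2$-boundedness, and on $(2B)^c$ by exploiting the weight $(1+d(x,y)/2^j)^\beta$ together with the cancellation $a=Lb$ for large scales. If you want to salvage your molecular scheme for Theorem \ref{maintheorem}, you would have to replace the wave-equation lemma by these Hebisch-type weighted kernel estimates when proving the off-diagonal decay; as written, your argument proves the weaker Theorem \ref{maintheorem2} (with $\alpha>(q+1)/2$) rather than the stated result.
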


\begin{remark} {\rm If we additionally assume that  for every
$y\in\Omega$ there exist  constants $\kappa>0$ and $c>0$ such that
$\mu(B_d(y,s))\geq cs^\kappa$ for $s>1$, then the operator $m(L)$
extends uniquely to a bounded operator on $H^1_L$ (see Section 5
for details).}
\end{remark}

\begin{remark}
 {\rm It turns out that if we replace (\ref{condition}) by the stronger condition
 \begin{equation}\label{condition_st}
\sup_{t>0} \| \eta (\, \cdot \,) m(t\,\cdot \,)\|_{W^{2
,\alpha}(\mathbb R)}\leq C_\eta,
 \end{equation}
 with some $\alpha > (q+1)\slash 2$, then the multiplier
 theorem  holds for Hardy spaces
 associated with more general semigroups, that is, semigroups
 satisfying Davies-Gaffney estimates. This will be discussed in
 Section 4. We  present two seemingly similar theorems with two different proofs.
 The first proof, thanks to \cite{Hebisch}, could be also adapted
 to cover the case  of a broader class of  semigroups with integral
 kernels of a very  mild decay. The other one does not even
 require the existence of integral kernels of the semigroups under
 consideration, however depends very much on the finite speed propagation of the wave
 equation associated with generators which is in fact  equivalent  to Davies-Gaffney
 estimates, see, e.g., \cite{S2}, \cite{CS}}.
\end{remark}

 Spectral multiplier theorems on various spaces attracted attention
 of many authors (see, for example, \cite{Alex}, \cite{Bl}, \cite{Christ}, \cite{Cowling},
 \cite{DMM}, \cite{DOS}, \cite{Dz2}, \cite{DZ4}, \cite{FS}, \cite{GMST}, \cite{hebisch0}, \cite{HZ}, \cite{MM}, \cite{MS},
 \cite{S},
 \cite{Stein}, and references there). E. M. Stein \cite{Stein}
 proved that if $-A$ is the infinitesimal generator of a symmetric
 diffusion semigroup and $m$ is of Laplace transform type, then
 $m(A)$ is bounded on $L^p$, $1<p<\infty$.  E. Stein and A.
 Hulanicki (see \cite{FS}) noticed  that if $-A$ is a sublaplacian on a
 stratified Lie group $G$, then the convolution kernel of the operator
 $m(A)$ satisfies Calder\'on-Zygmund type estimates. This fact
 together with atomic decompositions of  the Hardy spaces $H^p(G)$
 leads to a spectral multiplier theorem on these spaces (see \cite[Theorem 6.25]{FS}).
 The finite speed propagation of the wave equation
  was used by Sikora
  \cite {S} and \cite{S2} for proving  $L^p$ bounds for certain
 operators.  Actually, the technique  of the proof of Lemma \ref{dd1} is taken
from \cite{S}.

 The development of the theory of real Hardy spaces in $\mathbb
 R^d$ had its origin in works
 Stein-Weiss \cite{SW} and Fefferman-Stein \cite{FeS}. An
 important contribution to the theory were  atomic decompositions
 proved by Coifman \cite{C} for $d=1$ and Latter \cite{L} for
 $d>1$. The extension of $H^p$ on  the spaces of homogenous type
 is due to Coifman-Weiss \cite{CW} (see also \cite{MS}).
Hardy spaces associated with various semigroups of linear operators
 were considered by many authors. For their properties and equivalent characterizations
 we refer the reader to \cite{ADM}-\cite{BDT},
 \cite{DD}-\cite{DGMTZ}, \cite{GC}, \cite{Hof}, \cite{MM2}, \cite{MPR}.

\section{Functional calculi}

 For $\beta\geq 0$ let $\omega_\beta(x,y)=(1+d(x,y))^\beta$. The function
 is submultiplicative, that is, $\omega_\beta (x,y)\leq
 \omega_\beta (x,z)\omega_\beta (z,y)$.

 For an integral kernel $k(x,y)$ and $\beta >0$ we define
 \begin{equation}\begin{split}\nonumber
 \| k(x,y)\|_{\omega (\beta)} & =\sup_{x\in  \Omega}\int
 |k(x,y)|(1+d(x,y))^\beta d\mu(y) \\
 & \ \ +\sup_{y\in\Omega}\int
 |k(x,y)|(1+d(x,y))^\beta d\mu(x).
 \end{split}\end{equation}

  The following theorem is a consequence of   (\ref{G1}) and results of W. Hebisch
  \cite[Theorem 2.10]{Hebisch}.

  \begin{teo}\label{hebisch}
 Let $(\Omega, d, \, \mu)$ and $\{T_t\}_{t>0}$ satisfy
 (\ref{growth}) and (\ref{G1}) respectively.
 For $\alpha,\ \beta >0$ with $\alpha>\beta+q\slash 2$
 there exists a constant $C'>0$ such
 that for every function $\eta\in W^{\infty, \alpha}(\mathbb R)$ with $\text{\rm supp}\, \eta
 \subset(1\slash 4, 4)$ the multiplier operator
 $$ \eta (L)f=\int_0^\infty \eta (\lambda )\, dE_L(\lambda)f$$
 is of the form
 $$ \eta (L)f(x) =\int_{\Omega } \eta (L)(x,y)f(y)\, d\mu (y)$$
 with
 \begin{equation}\label{wal}
   \| \eta (L)(x,y)\|_{\omega (\beta )} \leq C'\|
 \eta\|_{W^{\infty ,\alpha }(\mathbb R)}.
 \end{equation}
 The constant $C'$ in (\ref{wal}) depends only on $\alpha,\,
 \beta$ and the constants $C, \, q$ from (\ref{growth}) and constants $C_0, \,
 c_0$ from (\ref{G1}).
  \end{teo}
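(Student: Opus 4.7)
The plan is to reduce the statement to a direct application of Hebisch's Theorem 2.10 in \cite{Hebisch}. Hebisch's result converts a weighted $L^2$ estimate on the heat kernel (a Plancherel-type input) into the claimed weighted $L^1$ bound on the spectral-multiplier kernel, so the work splits into verifying the input and unpacking the conclusion symmetrically.

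First I would show that the Gaussian bound (\ref{G1}) implies the weighted $L^2$-estimate
\begin{equation*}
\sup_{x\in\Omega} V(x,\sqrt t)\int_{\Omega}|T_t(x,y)|^{2}\bigl(1+d(x,y)/\sqrt t\bigr)^{2\gamma}\,d\mu(y)\leq C_{\gamma}
\end{equation*}
for every $\gamma\geq 0$ and $t>0$, which is the natural hypothesis of Hebisch's theorem. The argument is a standard dyadic annular decomposition: setting $A_{j}=\{y: 2^{j}\sqrt t\leq d(x,y)<2^{j+1}\sqrt t\}$ and using (\ref{growth}) to bound $V(x,2^{j+1}\sqrt t)\leq C 2^{jq}V(x,\sqrt t)$, while (\ref{G1}) contributes the Gaussian factor $\exp(-c\cdot 4^{j})$ on $A_{j}$; since Gaussian decay dominates any polynomial factor $2^{j(q+2\gamma)}$, summing in $j$ yields the bound uniformly in $t$ and $x$.

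Once this Plancherel-type input is available, Hebisch's Theorem 2.10 applied to the rescaled function $\eta$ supported in $(1/4,4)$ produces directly
\begin{equation*}
\sup_{x\in\Omega}\int_{\Omega}|\eta(L)(x,y)|\,(1+d(x,y))^{\beta}\,d\mu(y)\leq C'\|\eta\|_{W^{\infty,\alpha}(\mathbb{R})},
\end{equation*}
under the assumption $\alpha>\beta+q/2$ (the gap $q/2$ reflecting the doubling dimension exactly as in the classical H\"ormander threshold). The symmetric half of the $\omega(\beta)$-norm, namely the supremum in $y$, is obtained by applying the same estimate to the adjoint operator $\eta(L)^{*}=\bar\eta(L)$: since $L$ is non-negative self-adjoint, $\bar\eta(L)$ has kernel $\overline{\eta(L)(y,x)}$ and the same Sobolev norm $\|\bar\eta\|_{W^{\infty,\alpha}}=\|\eta\|_{W^{\infty,\alpha}}$, so the two suprema in the definition of $\|\cdot\|_{\omega(\beta)}$ are controlled identically.

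The genuine difficulty, which is precisely the content hidden inside \cite[Theorem 2.10]{Hebisch}, is the passage from $L^{2}$-Plancherel estimates on spectral projections to weighted $L^{1}$ norms of the kernel; this is where the Sobolev order $\alpha>\beta+q/2$ is consumed via a Cauchy--Schwarz argument combined with a partition-of-unity decomposition of the spectrum and the doubling bound (\ref{growth}). Once this black box is in place, the remaining arguments above are routine bookkeeping, and the dependence of $C'$ on $(\alpha,\beta,q,C,C_{0},c_{0})$ only is evident from each step.
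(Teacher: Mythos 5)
Your proposal follows the same route as the paper, which offers no independent proof of this theorem and simply states it as a consequence of the Gaussian bound (\ref{G1}) and Hebisch's Theorem 2.10 in \cite{Hebisch}. Your additional steps --- verifying the weighted $L^2$ Plancherel-type hypothesis via dyadic annuli together with (\ref{growth}), and obtaining the second half of the $\omega(\beta)$-norm from the adjoint $\bar\eta(L)$ --- are correct and merely make explicit what the paper leaves to the citation.
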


 In this paper we shall use the following  scaling argument. For
 $\tau>0$, let $d^{\{\tau\}}(x,y)=\tau^{-1\slash 2} d(x,y)$. Then the space
 $(\Omega, d^{\{\tau\}}(x,y),\mu)$ is the space of homogeneous type such that
 \begin{equation}\label{growth1}
 V_\tau(x,st)=\mu(B_{d^{\{\tau\}}}(x,st))\leq Cs^q\mu(B_{d^{\{\tau\}}}(x,t)), \ \
 s>1,
 \end{equation}
 with the same constants $C,q$ as in (\ref{growth}).
  Similarly, let
 $L^{\{\tau\}}=\tau L$ and  $\{T_t^{\{\tau\}}\}_{t>0}$ be the
 semigroup generated by $-L^{\{\tau\}}$. Clearly,
 $T^{\{\tau\}}_t(x,y)=T_{\tau t}(x,y)$ are the integral kernels of $T_t^{\{\tau\}}$.
 Hence, for $k=0,1,2,...$, we
 have
 \begin{equation}\label{G4}
   \left|\frac{\partial^k}{\partial t^k}T^{\{\tau\}}_t(x,y)\right|
   \leq \frac{C_k}{t^k V_\tau (x,\sqrt{t})}
   \exp\left(-\frac{d^{\{\tau\}}(x,y)^2}{c_k t}\right) \ \ \text{  for } \
 x,y\in\Omega, \ t>0,
 \end{equation}
 with the same constants $C_k, c_k$ as in (\ref{G1}) and
 (\ref{G2}) independent of $\tau$.

 Therefore, from Theorem \ref{hebisch} we conclude that
 \begin{equation}\label{scale2}\begin{split}
 & \int_{\Omega} |\eta(\tau
 L)(x,y)|\left(1+\frac{d(x,y)}{\sqrt{\tau}}\right)^\beta \,
 d\mu (y)+\int_{\Omega} |\eta(\tau
 L)(x,y)|\left(1+\frac{d(x,y)}{\sqrt{\tau}}\right)^\beta \, d\mu (x)\\
 &\leq C\|
 \eta\|_{W^{\infty ,\alpha}(\mathbb R),}
 \end{split}\end{equation}
 provided  $\text{supp}\, \eta \subset (4^{-1}, 4)$,  $\alpha>\beta+q\slash 2$.

 \begin{propo}\label{Prop1} Assume that $m$ satisfies the assumptions of
 Theorem \ref{maintheorem}. For $N=1,2$, we set
 $$\Phi_t^{\langle N\rangle }(\lambda)=(t^2\lambda)^N
 e^{-t^2\lambda}m(\lambda).$$
Then there exist $\beta>0$ and $C''>0$ such that
\begin{equation}\label{scale_m}\begin{split}
\int_\Omega |\Phi_t^{\langle N\rangle
}(L)(x,y)|\left(1+\frac{d(x,y)}{t}\right)^\beta\, d\mu(x)\leq C'',
\end{split}\end{equation}
\begin{equation}\label{scale_m2}\begin{split}
\int_\Omega |\Phi^{\langle N\rangle}
_t(L)(x,y)|\left(1+\frac{d(x,y)}{t}\right)^\beta d\mu(y) \leq C''.
\end{split}\end{equation}
 \end{propo}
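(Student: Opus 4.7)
The plan is to localize the spectral multiplier $\Phi_t^{\langle N\rangle}$ dyadically at scale $t^{-2}$, apply Theorem~\ref{hebisch} (in the scaled form~(\ref{scale2})) to each dyadic piece, and then sum. Fix once and for all $\eta_0\in C_c^\infty(1/2,2)$ with $\sum_{k\in\mathbb Z}\eta_0(2^{-k}s)=1$ for $s>0$, and set $\tau_k=2^{-k}t^2$. Define
$$F_k(s) = \eta_0(s)\,(2^k s)^N\, e^{-2^k s}\,m(s/\tau_k),$$
which is supported in $(1/2,2)\subset(1/4,4)$. A direct computation shows $\Phi_t^{\langle N\rangle}(\lambda) = \sum_{k\in\mathbb Z} F_k(\tau_k\lambda)$, so the corresponding identity holds for the integral kernels of $\Phi_t^{\langle N\rangle}(L)$ and $F_k(\tau_k L)$.

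To estimate $\|F_k\|_{W^{\infty,\alpha}(\mathbb R)}$, factor $F_k = [\eta_0(\cdot)\,m(\tau_k^{-1}\cdot)]\cdot h_k$, where $h_k(s) = (2^k s)^N e^{-2^k s} = h(2^k s)$ with $h(u) = u^N e^{-u}$. The hypothesis~(\ref{condition}) applied with $\eta=\eta_0$ and $t=\tau_k^{-1}$ gives $\|\eta_0(\cdot)m(\tau_k^{-1}\cdot)\|_{W^{\infty,\alpha}}\le C$ uniformly in $k$. The chain rule, together with the fact that $h$ vanishes to order $N\ge 1$ at $0$, yields
$$\|h_k\|_{W^{\infty,\alpha}} \le C\cdot\begin{cases} 2^{kN}, & k\le 0, \\ 2^{kJ} e^{-c 2^k}, & k\ge 1, \end{cases}$$
for some $J=J(N,\alpha)$ and $c>0$. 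Since multiplication by a smooth function of fixed compact support is bounded on $W^{\infty,\alpha}$, the same bound (up to constants) holds for $\|F_k\|_{W^{\infty,\alpha}}$.

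Next, pick any $\beta$ with $0<\beta<\min(2,\alpha-q/2)$, which is possible because $\alpha>q/2$. Applying~(\ref{scale2}) with $\tau=\tau_k$ gives, for each $k$,
$$\int_\Omega |F_k(\tau_k L)(x,y)|\Big(1+\frac{d(x,y)}{\sqrt{\tau_k}}\Big)^\beta d\mu(y) \le C\|F_k\|_{W^{\infty,\alpha}},$$
and symmetrically for the integral in $d\mu(x)$. Since $\sqrt{\tau_k}=2^{-k/2}t$, we have $(1+d(x,y)/t)^\beta\le(1+d(x,y)/\sqrt{\tau_k})^\beta$ for $k\ge 0$, while for $k<0$,
$$(1+d(x,y)/t)^\beta \le 2^{-k\beta/2}\Big(1+d(x,y)/\sqrt{\tau_k}\Big)^\beta.$$
Summing over $k$ yields
$$\int_\Omega |\Phi_t^{\langle N\rangle}(L)(x,y)|\Big(1+\frac{d(x,y)}{t}\Big)^\beta d\mu(y) \le C\sum_{k\le 0} 2^{k(N-\beta/2)} + C\sum_{k\ge 1} 2^{kJ} e^{-c 2^k},$$
both sums being finite since $\beta<2\le 2N$. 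The symmetric inequality~(\ref{scale_m}) is obtained in the same way.

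The main technical point is the bookkeeping of scales in the last step: the geometric gain $2^{kN}$ at low frequencies (coming from the vanishing of $\Phi_t^{\langle N\rangle}$ at $\lambda=0$) has to overcome the loss $2^{-k\beta/2}$ incurred when comparing the scale $\sqrt{\tau_k}$ with $t$ for $k<0$. This forces the restriction $\beta<2N$, which is consistent with the statement of the proposition merely asserting the existence of some positive $\beta$.
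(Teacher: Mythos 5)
Your proof is correct and follows essentially the same route as the paper's: a dyadic partition of unity in the spectral variable at scale $t^{-2}$, the weighted kernel bound (\ref{scale2}) from Theorem \ref{hebisch} applied to each rescaled piece, and summation using the gain $2^{kN}$ at low frequencies (from $\lambda^N$) and the decay at high frequencies (from $e^{-t^2\lambda}$), with $\beta$ chosen so that $\beta<2N$ and $\alpha>\beta+q\slash 2$. The only cosmetic difference is that the paper first normalizes to $t=1$ and invokes the scaling argument, while you carry the scales $\tau_k=2^{-k}t^2$ explicitly.
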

 \begin{proof}
 It suffices to prove (\ref{scale_m}) for $t=1$ and then use the
 scaling argument. Fix a $C_c^\infty (\frac{1}{2},2)$ function
$\psi $ such that
 \begin{equation}\label{ppsi} \sum_{j\in\mathbb Z}
\psi(2^{-j}\lambda )=1 \ \ \ \text{for} \ \lambda >0.
\end{equation}

Denote $n_j(\lambda)=\psi (2^{-j}\lambda)\lambda^N
e^{-\lambda}m(\lambda)$, $\tilde n_j(\lambda)=n_j(
2^{j}\lambda)=\psi (\lambda)(2^j\lambda)^N e^{-2^j\lambda}
m(2^j\lambda)$.
 Clearly, $\text{supp}\, \tilde n_j\subset (2^{-1},
2)$ and
\begin{equation}\label{est_norm}
\| \tilde n_j\|_{W^{\infty ,\alpha}(\mathbb R)}\leq \begin{cases}
C 2^{-j} \ \
& \text{for} \ j\geq 0;\\
C2^{jN}\ \ \ & \text{for } \ j <0.
\end{cases}
\end{equation}
Let $0<\beta \leq 1\slash 2$ be such that
 $\alpha >\beta +q \slash 2$. Applying (\ref{scale2}) combined with
(\ref{est_norm}) we obtain
\begin{equation}\label{NJ}
\int_\Omega |n_j(L)(x,y)|\left(1+2^{j\slash
2}d(x,y)\right)^{\beta}d\mu(x)\leq \begin{cases}C2^{-j} \ \
&\text{for} \ j\geq 0;\\
C2^{jN} \ \ &\text{for } j<0,\\
\end{cases}\end{equation}
 with the same bounds  when integrating with respect to
 $d\mu(y)$. Obviously,
 \begin{equation}\label{s1}
 \int_\Omega |\Phi_1^{\langle N\rangle }(L)(x,y)| \, d\mu(x)\leq \sum_{j\in\mathbb Z}
 \int_\Omega |n_j(L)(x,y)|\, d\mu(x)\leq C'.
 \end{equation}
 Moreover, from (\ref{NJ}) we also deduce  that
\begin{equation}\begin{split}
 \int_\Omega |\Phi_1^{\langle N\rangle }(L)(x,y)|d(x,y)^\beta\, d
 \mu(x) &\leq  \sum_{j\in\mathbb Z} \int_\Omega |n_j(L)(x,y)|d(x,y)^\beta\, d
 \mu(x)\\
 &\leq \sum_{j\geq 0} C 2^{-j-\beta\slash 2}+\sum_{j<0}C'2^{jN-j\beta\slash 2}\leq C',
 \end{split} \end{equation}
 which implies  (\ref{scale_m}) for
 $t=1$. To prove  (\ref{scale_m2}) we proceed in the same way.
 \end{proof}

 \begin{lema}\label{mainlemma} For $N=1$ or $N=2$, let
 \begin{equation}\label{theta}
 \Theta_j^{\langle N\rangle }(x,y)=\sup_{2^j\leq t
 <2^{j+1}}\sup_{d(x,x')<t}|\Phi_t^{\langle N\rangle }(x',y)|.
 \end{equation}
 Then there exist constants $C'>0$ and $\beta>0$ such that
 \begin{equation}
 \int_{\Omega }\Theta_j^{\langle N\rangle }(x,y)\left(1+\frac{d(x,y)}{2^j}\right)^\beta
 d\mu(x)\leq C'.
 \end{equation}
 \end{lema}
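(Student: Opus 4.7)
The plan is to handle the two suprema defining $\Theta_j^{\langle N\rangle}(x,y)$ separately. For the inner supremum over $x'$ with $d(x,x')<t$, the idea is to absorb the variation in $x'$ by peeling off a factor of the heat semigroup $T_{t^2/2}$, whose kernel has Gaussian decay and therefore smooths in $x'$. For the outer supremum over $t\in[2^j,2^{j+1})$, the idea is to use the fundamental theorem of calculus to reduce the supremum to an average plus an integrated derivative, then invoke the identity relating $\partial_t\Phi_t^{\langle N\rangle}$ to $\Phi_t^{\langle N\rangle}$ and $\Phi_t^{\langle N+1\rangle}$ together with Proposition \ref{Prop1}.

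\emph{Handling the $x'$-supremum.} Write $\Phi_t^{\langle N\rangle}(\lambda)=e^{-t^2\lambda/2}\,\Xi_t(\lambda)$ where $\Xi_t(\lambda)=2^N\Phi_{t/\sqrt 2}^{\langle N\rangle}(\lambda)$, so that
$$\Phi_t^{\langle N\rangle}(L)(x',y)=\int_\Omega T_{t^2/2}(x',z)\,\Xi_t(L)(z,y)\,d\mu(z).$$
Convert the Gaussian bound (\ref{G1}) on $T_{t^2/2}$ into polynomial decay of arbitrary order $M$, and use the doubling property to pass from $V(x',t)$ and $d(x',z)$ to $V(x,t)$ and $d(x,z)$ (the hypothesis $d(x,x')<t$ forces these quantities to be comparable). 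This gives
$$\sup_{d(x,x')<t}|\Phi_t^{\langle N\rangle}(L)(x',y)|\leq \int_\Omega\frac{C_M}{V(x,t)}\left(1+\frac{d(x,z)}{t}\right)^{-M}|\Xi_t(L)(z,y)|\,d\mu(z).$$
Substituting into the integral of the lemma, using Fubini, the splitting $(1+d(x,y)/2^j)^\beta\leq C(1+d(x,z)/2^j)^\beta(1+d(z,y)/2^j)^\beta$, and the standard doubling-space estimate
$$\int_\Omega V(x,t)^{-1}\left(1+d(x,z)/t\right)^{-M+\beta}d\mu(x)\leq C,$$
valid once $M-\beta>2q$, the claim reduces to proving
$$\int_\Omega\sup_{t\in[2^j,2^{j+1})}|\Xi_t(L)(z,y)|\left(1+\frac{d(z,y)}{2^j}\right)^\beta d\mu(z)\leq C'.$$

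\emph{Handling the $t$-supremum.} For any $F\in C^1([2^j,2^{j+1}])$ and $t$ in that interval, the identity $F(t)=2^{-j}\int_{2^j}^{2^{j+1}}F(s)\,ds+2^{-j}\int_{2^j}^{2^{j+1}}\int_s^t F'(r)\,dr\,ds$ gives
$$\sup_{t\in[2^j,2^{j+1})}|\Xi_t(L)(z,y)|\leq 2^{-j}\int_{2^j}^{2^{j+1}}|\Xi_s(L)(z,y)|\,ds+\int_{2^j}^{2^{j+1}}|\partial_s\Xi_s(L)(z,y)|\,ds.$$
Integrated against $(1+d(z,y)/2^j)^\beta d\mu(z)$, the first term is controlled by Proposition \ref{Prop1} applied to $\Xi_s=2^N\Phi_{s/\sqrt 2}^{\langle N\rangle}$, using that $1+d(z,y)/2^j$ is comparable to $1+d(z,y)/(s/\sqrt 2)$ uniformly for $s\in[2^j,2^{j+1})$. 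For the second term, the elementary identity
$$\partial_s\Phi_s^{\langle N\rangle}(\lambda)=\frac{2N}{s}\Phi_s^{\langle N\rangle}(\lambda)-\frac{2}{s}\Phi_s^{\langle N+1\rangle}(\lambda)$$
yields $|\partial_s\Xi_s(L)(z,y)|\leq C\cdot 2^{-j}\bigl(|\Phi_{s/\sqrt 2}^{\langle N\rangle}(L)(z,y)|+|\Phi_{s/\sqrt 2}^{\langle N+1\rangle}(L)(z,y)|\bigr)$, and Proposition \ref{Prop1} again controls each summand (its proof is valid for every $N\geq 1$, since $\sum_{j<0}2^{jN}$ converges). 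The factor $2^{-j}$ exactly compensates the length $2^j$ of the integration interval.

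The main technical obstacle is simply the bookkeeping: one must choose $M$ in the $x'$-smoothing step larger than $\beta+2q$ so that the $x$-integral converges uniformly in $t$, and one must invoke Proposition \ref{Prop1} for $N+1\in\{2,3\}$, which is immediate from the proof of that proposition.
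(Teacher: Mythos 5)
Your argument is correct, but it takes a genuinely different (and longer) route than the paper for the supremum in $t$. The paper disposes of both suprema in one stroke: it writes $\Phi_t^{\langle N\rangle}(\lambda)=(2^{1-j}t)^{2N}\exp(-(t^2-2^{2(j-1)})\lambda)\,\Phi_{2^{j-1}}^{\langle N\rangle}(\lambda)$, anchoring the reference function at the \emph{fixed} time $2^{j-1}$; since $t^2-2^{2(j-1)}\sim 2^{2j}$ uniformly for $t\in[2^j,2^{j+1})$ and $d(x,x')<t$, the Gaussian bound on $T_{t^2-2^{2(j-1)}}(x',z)$ produces a majorant $C\int_\Omega V(x,2^j)^{-1}\exp(-d(x,z)^2/c'2^{2j})\,|\Phi^{\langle N\rangle}_{2^{j-1}}(z,y)|\,d\mu(z)$ depending on neither $t$ nor $x'$, so both suprema come for free and Proposition \ref{Prop1} applied at the single scale $2^{j-1}$ finishes the proof. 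You instead anchor at $t/\sqrt{2}$, which still depends on $t$, and must then tame the $t$-supremum via the fundamental theorem of calculus and the identity $\partial_s\Phi_s^{\langle N\rangle}=\frac{2N}{s}\Phi_s^{\langle N\rangle}-\frac{2}{s}\Phi_s^{\langle N+1\rangle}$; this obliges you to extend Proposition \ref{Prop1} to $N+1\in\{2,3\}$ (which, as you correctly note, is immediate since $\sum_{j<0}2^{jN}$ converges for all $N\geq 1$) and to justify differentiating the kernel in $s$, both of which are sound. Your bookkeeping is right (the requirement $M>\beta+2q$, the uniform comparability of the weights at scales $s/\sqrt{2}$ and $2^j$); the one point worth making explicit is that after bounding the $x'$-supremum you must also replace $V(x,t)$ and $(1+d(x,z)/t)^{-M}$ by their values at $t=2^j$ before the $t$-supremum can be moved inside the $z$-integral, harmless because these quantities are uniformly comparable on $[2^j,2^{j+1})$. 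The paper's anchoring trick is worth remembering: by comparing to a single fixed time in the dyadic block it avoids the derivative estimate and the Sobolev-in-$t$ step entirely.
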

 \begin{proof}
 Fix $2^j\leq t<2^{j+1}$ and let $d(x,x')<t$. Since
 $$\Phi_t^{\langle N\rangle }(\lambda)=(2^{1-j}t)^{2N}\exp(-(t^2-2^{2(j-1)}
 )\lambda) \Phi^{\langle N\rangle }_{2^{j-1}}(\lambda)$$
  and $ V(x', (t^2-2^{2(j-1)})^{1\slash
 2})\sim V(x,2^{j})$ for $d(x,x')<t$, we have
 \begin{equation}\label{b1}
 \begin{split}
 |\Phi_t^{\langle N\rangle }(x',y)| &=(2^{1-j}t)^{2N} \left| \int_\Omega
 T_{t^2-2^{2(j-1)}}(x',z)\Phi^{\langle N\rangle }_{2^{j-1}}(z,y)\, d\mu(z)\right|\\
 &\leq C''\int_\Omega
 \frac{\exp(-d(x',z)^2\slash c_0(t^2-2^{2(j-1)}))}{V(x', (t^2-2^{2(j-1)})^{1\slash
 2})} |\Phi^{\langle N\rangle }_{2^{j-1}}(z,y)|\, d\mu(z)\\
 &\leq C\int_\Omega
 \frac{\exp(-d(x,z)^2\slash c'2^{2j})}{V(x, 2^j)} |\Phi^{\langle N\rangle }_{2^{j-1}}(z,y)|\, d\mu(z).\\
 \end{split}\end{equation}
Using (\ref{b1}) and  Proposition \ref{Prop1}  we obtain
\begin{equation}\begin{split}\nonumber
 &\int_\Omega \Theta^{\langle N\rangle }_j(x,y)\left(1+\frac{d(x,y)}{2^j}\right)^\beta\,
 d\mu(x)\\
 & \leq C\int_\Omega\int_\Omega \frac{\exp(-\frac{d(x,z)^2}{ c'2^{2j}})}{V(x, 2^j)}
 |\Phi^{\langle N\rangle }_{2^{j-1}}(z,y)|\,\left(1+\frac{d(x,z)}{2^j}\right)^\beta
 \left(1+\frac{d(z,y)}{2^j}\right)^\beta   d\mu(z)\, d\mu(x)\\
 & \leq C'. \end{split}\end{equation}
 \end{proof}

\section{Proof of Theorem \ref{maintheorem}}

 It suffices
 to establish that
 there exists a constant $C$ such that for every $(1,2,1)$-atom $a$ for $H^1_L$
 we have
 \begin{equation}\label{reduction444}
\|S_h( m(L) a)\|_{L^1(\Omega)}\leq C.
 \end{equation}

 Our proof of (\ref{reduction444}) borrows ideas from \cite{Dz9}.
 Let $a$ be a $(1,2,1)$-atom for $H^1_L$ and let $b$ and $B=B_d(y_0,r)$ be as
in
 (\ref{A1})--(\ref{A3}).
 Since $S_h$ is bounded on $L^2(\Omega)$, we have
  \begin{equation}\label{C1}
  \| S_h m(L)a\|_{L^1(B_d(y_0, 2r), d\mu)} \leq C' \| m(L)a\|_{L^2(\Omega)}
  \mu(B)^{1\slash 2}\leq C\| a\|_{L^2(\Omega)}\mu(B)^{1\slash 2} \leq C.
  \end{equation}
  It suffices to estimate $S_h m(L)a$ on $(2B)^c$, where $2B=B_d(y_0,2r)$.
  Clearly, $\Phi^{\langle 1\rangle}_t(L)a =t^{-2}\Phi_t^{\langle
  2\rangle}(L)b$.
  Set $j_0=\log_2 r$.  Then
  \begin{equation}\begin{split}\nonumber
 (S_h m(L) & a(x))^2  = \iint_{\Gamma (x)}
 \left|t^2(LT_{t^2}m(L)a)(x')\right|^2\frac{d\mu(x')}{V(x',t)}\frac{dt}{t}\\
 &=\sum_{j\in\mathbb Z} \int_{2^j}^{2^{j+1}}\int_{d(x,x')<t}
\left| \Phi_t^{\langle 1\rangle}(L)a(x')\right|^2\frac{d\mu(x')}{V(x',t)}\frac{dt}{t}\\
&=\sum_{j\leq j_0}\int_{2^j}^{2^{j+1}}\int_{d(x,x')<t}
\left| \Phi_t^{\langle 1\rangle}(L)a(x')\right|^2\frac{d\mu(x')}{V(x',t)}\frac{dt}{t}\\
&\ \ + \sum_{j> j_0}\int_{2^j}^{2^{j+1}}\int_{d(x,x')<t} \left|
t^{-2}\Phi_t^{\langle 2\rangle}(L)b(x')\right|^2
\frac{d\mu(x')}{V(x',t)}\frac{dt}{t}.\\
 \end{split}\end{equation}
 Using (\ref{theta}) we have
\begin{equation}\label{c1}\begin{split}
(S_h m(L)  a(x))^2
  &\leq C \sum_{j\leq j_0}\int_{2^j}^{2^{j+1}}\int_{d(x,x')<t}
\left( \int_\Omega \Theta_j^{\langle 1\rangle}(x,y)|a(y)|\,
d\mu(y)
\right)^2\frac{d\mu(x')}{V(x',t)}\frac{dt}{t}\\
& \  +C\sum_{j > j_0}\int_{2^j}^{2^{j+1}}\int_{d(x,x')<t} \left(
\int_\Omega t^{-2}\Theta_j^{\langle
2\rangle}(x,y)|b(y)|d\mu(y)\right)^2
\frac{d\mu(x')}{V(x',t)}\frac{dt}{t}\\
 & \leq C \sum_{j\leq j_0}
\left( \int_\Omega \Theta_j^{\langle 1\rangle}(x,y)|a(y)|\,
d\mu(y)\right)^2\\
&\ \ \ \ + C \sum_{j > j_0} \left( \int_\Omega
2^{-2j}\Theta_j^{\langle 2\rangle}(x,y)|b(y)|\,
d\mu(y)\right)^2, \\
  \end{split}\end{equation}
because
$$\int_{2^j}^{2^{j+1}} \int_{d(x,x')<t}\frac{d\mu (x')}{V(x',t)}\frac{dt}{t}\leq
C.$$

 From (\ref{c1}) we trivially get
 \begin{equation}\begin{split}\nonumber
 (S_h &m(L)a)(x)\\
 &\leq C \left(\sum_{j\leq j_0}
\int_\Omega \Theta_j^{\langle 1\rangle}(x,y)|a(y)|\, d\mu(y)
 +  \sum_{j > j_0}  \int_\Omega 2^{-2j}\Theta_j^{\langle
2\rangle}(x,y)|b(y)|\, d\mu(y)\right).
\end{split}\end{equation}

  Applying Lemma \ref{mainlemma} we obtain

 \begin{equation}\begin{split}\label{dd00}
 & \int_{(2B)^c } (S_h m(L)a)(x)\, d\mu(x)\\
 &\leq C \sum_{j\leq j_0}
 \int_{(2B)^c }  \int_{d(y,y_0)<r} \Theta_j^{\langle 1\rangle}(x,y)
 \left(\frac{d(x,y)}{2^j}\right)^\beta \left(\frac{2^j}{r}\right)^\beta |a(y)|\,
 d\mu(y)\, d\mu(x)\\
 &
 \ \ \ + C \sum_{j > j_0} \int_{(2B)^c} \int_{d(y,y_0)<r} 2^{-2j}\Theta_j^{\langle
2\rangle}(x,y)|b(y)|\, d\mu(y)\, d\mu(x)\\
&\leq C \sum_{j\leq j_0}
  \int_{d(y,y_0)<r}  \left(\frac{2^j}{r}\right)^\beta |a(y)|\,
 d\mu(y)
 + C \sum_{j > j_0} 2^{-2j} \int_{d(y,y_0)<r} |b(y)|\, d\mu(y)\, d\mu(x).\\
\end{split}\end{equation}
 By the Cauchy-Schwarz inequality  $\| a\|_{L^1(\Omega)}\leq 1$ and
 $\|b\|_{L^1(\Omega)}\leq r^2$. Since $2^{j_0}\sim r$, we easily
 conclude  from (\ref{dd00}) that

 \begin{equation}\begin{split}\nonumber
 & \int_{d(x,y_0)>2r } (S_h m(L)a)(x)\, d\mu(x)\leq C,
\end{split}\end{equation}
which together with (\ref{C1}) completes the proof of
(\ref{reduction444})
 %\end{proof}

\section{Spectral multiplier theorem for semigroups satisfying
Davies-Gaffney estimates.}

 Let $\{\mathcal T_t\}_{t>0}$ be a semigroup of linear
  operators on $L^2(\Omega)$ generated by $-\mathcal L$,
  where $\mathcal L$ is a non-negative, self-adjoint operator which is injective on its domain.
   We assume that
   $\{\mathcal T_t\}_{t>0}$  satisfies
 Davies-Gaffney estimates, which briefly  speaking means that
 \begin{equation}\label{D-G} |\langle \mathcal T_t f_1,f_2\rangle |\leq C
 \exp\left(-\frac{\text{dist}(U_1,U_2)^2}{ct}\right)\|
 f_1\|_{L^2(\Omega)}\| f_2\|_{L^2(\Omega)}
 \end{equation}
 for every $f_i\in L^2(\Omega)$, $\text{supp}\, f_i\subset U_i$,
 $i=1,2$, $U_i$ are open subsets of $\Omega$
  (see e.g., \cite{CS}, \cite{Hof} for details).

 The Hardy space
 $H^1_{\mathcal L}$, defined as in Section 1 by means of
 $L^1(\Omega)$ bounds of the square function (\ref{square}), were
 considered by Auscher, McIntosh, Russ \cite{AMR} and Hofmann, Lu,
 Mitrea, Mitrea, Yan \cite{Hof}. It was proved in \cite{Hof} that
 the space $H^1_{\mathcal L}$
  admits atomic decompositions into
 $(1,2,M)$-atoms associated with $\mathcal L$,
 provided $M>q\slash 4$, $M\in \mathbb N$ (see \cite{Hof}).
 Clearly, $L$ is replaced by $\mathcal L$ in the definition
 (\ref{A1})--(\ref{A3}) of $(1,2,M)$-atoms for $H^1_{\mathcal L}$.

 In this section we show that the following spectral multiplier theorem holds
 for Hardy spaces associated with   semigroups satisfying the Davies-Gaffney
 estimates.

 \begin{teo}\label{maintheorem2}
 Let $M>q\slash 4$, $M\in\mathbb N$. Assume  $m$ be  a bounded function defined on $(0,\infty)$ such
that for  some real number $\alpha > (q+1)\slash 2$ and any
nonzero function $\eta\in C_c^\infty (2^{-1}, 2)$ the condition
(\ref{condition_st}) holds.
  Then there exists a constant $C>0$ such that
  \begin{equation}\label{eqqq2}
 \| m(\mathcal L)a\|_{H^1_{\mathcal L}}\leq C \ \ \ \text{for every
 } \ (1,2,2M)\text{-atom}\ a \ \ \text{for the space} \ H^1_{\mathcal L}.
  \end{equation}
\end{teo}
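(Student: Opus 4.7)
The plan is to mirror the structure of the proof of Theorem \ref{maintheorem} but, in the absence of pointwise kernel bounds for $\mathcal T_t$, to substitute $L^2$ off-diagonal estimates derived from the finite speed of propagation of $\cos(t\sqrt{\mathcal L})$ (equivalent to the Davies-Gaffney estimate (\ref{D-G})) for the weighted kernel bounds of Proposition \ref{Prop1} and Lemma \ref{mainlemma}. The stronger Sobolev norm $W^{2,\alpha}$ with $\alpha>(q+1)/2$ replaces the $W^{\infty,\alpha}$ condition because we lose an extra $\tfrac12$ in Cauchy-Schwarz when converting from $L^2$ Sobolev information to integrable off-diagonal decay.

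\smallskip\noindent
\textbf{Step 1 (Off-diagonal $L^2$ bounds for $\Phi_t^{\langle N\rangle}(\mathcal L)$).} For each $N\in\mathbb N$ and any two Borel sets $E,F\subset\Omega$ I would first prove
\[
\bigl\|\chi_F\,\Phi_t^{\langle N\rangle}(\mathcal L)\,\chi_E\bigr\|_{L^2\to L^2}\;\le\;C\Bigl(1+\tfrac{\mathrm{dist}(E,F)}{t}\Bigr)^{-\beta}
\]
for some $\beta>q/2$. Decompose $m$ dyadically as in Proposition \ref{Prop1}, $n_j(\lambda)=\psi(2^{-j}\lambda)\lambda^N e^{-\lambda}m(\lambda)$, and pair each piece with the even function $G_j(\xi)=n_j(\xi^2)$, so by Fourier inversion
\[
n_j(\mathcal L)\;=\;\tfrac{1}{\pi}\int_0^\infty\widehat{G_j}(s)\cos(s\sqrt{\mathcal L})\,ds.
\]
For a truncation level $R$, finite speed propagation ensures the contribution of $|s|\le R$ vanishes between $E$ and $F$ whenever $\mathrm{dist}(E,F)>R$, while the tail is bounded by Cauchy-Schwarz,
\[
\int_{|s|>R}|\widehat{G_j}(s)|\,ds\;\le\;\Bigl(\int_{|s|>R}(1+s^2)^{-\alpha}\,ds\Bigr)^{1/2}\|G_j\|_{W^{2,\alpha}(\mathbb R)}\;\le\;CR^{-(\alpha-1/2)}\|G_j\|_{W^{2,\alpha}(\mathbb R)}.
\]
Taking $R=\mathrm{dist}(E,F)$, summing over $j$ with norm bounds of the type (\ref{est_norm}), and then restoring $t$-dependence via the scaling argument used in (\ref{scale2}), yields the claim with $\beta=\alpha-\tfrac12>q/2$ under the hypothesis $\alpha>(q+1)/2$.

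\smallskip\noindent
\textbf{Step 2 (Atomic reduction on annuli).} It suffices, as in Section 3, to show $\|S_h(m(\mathcal L)a)\|_{L^1(\Omega)}\le C$ uniformly over $(1,2,2M)$-atoms $a=\mathcal L^{2M}b$ associated to a ball $B=B_d(y_0,r)$. The contribution of $2B$ is again controlled by $L^2$-boundedness of $m(\mathcal L)$ and $S_h$ together with $\|a\|_2\le\mu(B)^{-1/2}$. On $(2B)^c$, decompose into annuli $A_k=\{2^kr\le d(\cdot,y_0)<2^{k+1}r\}$ and apply Cauchy-Schwarz,
\[
\int_{A_k}S_h(m(\mathcal L)a)\,d\mu\;\le\;\mu(A_k)^{1/2}\,\|S_h(m(\mathcal L)a)\|_{L^2(A_k)}\;\le\;C\,2^{kq/2}\mu(B)^{1/2}\,\|S_h(m(\mathcal L)a)\|_{L^2(A_k)}.
\]
A Fubini/doubling manipulation reduces the $L^2(A_k)$-norm of the square function to
\[
\|S_h(m(\mathcal L)a)\|_{L^2(A_k)}^2\;\le\;\int_0^\infty\|\chi_{A_k^\ast(t)}\,\Phi_t^{\langle 1\rangle}(\mathcal L)a\|_{L^2(\Omega)}^2\,\frac{dt}{t},
\]
where $A_k^\ast(t)$ is a $t$-neighbourhood of $A_k$ satisfying $\mathrm{dist}(A_k^\ast(t),B)\gtrsim 2^kr$ whenever $t\lesssim 2^kr$.

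\smallskip\noindent
\textbf{Step 3 (Two regimes of $t$ and closing the bounds).} Split the $t$-integral at $t\sim r$. For $t\le r$ apply Step 1 with $N=1$ directly to $a$; together with $\|a\|_2\le\mu(B)^{-1/2}$ this gives
\[
\|\chi_{A_k^\ast(t)}\,\Phi_t^{\langle 1\rangle}(\mathcal L)a\|_2\;\le\;C\bigl(1+2^kr/t\bigr)^{-\beta}\mu(B)^{-1/2}.
\]
For $t>r$ exploit the cancellation $a=\mathcal L^{2M}b$ via
\[
\Phi_t^{\langle 1\rangle}(\mathcal L)a\;=\;t^{-4M}\,\Phi_t^{\langle 2M+1\rangle}(\mathcal L)b,
\]
so that Step 1 for $N=2M+1$ combined with $\|b\|_2\le r^{4M}\mu(B)^{-1/2}$ extracts an extra factor $(r/t)^{4M}$. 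Squaring, integrating over $t$, and summing over $k\ge 1$ yields geometric series that converge under $\beta>q/2$ and $4M>q/2$, both ensured by the hypotheses.

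\smallskip\noindent
\textbf{Main obstacle.} The technical heart is Step 1, where the off-diagonal exponent $\beta$ must be pushed strictly past $q/2$ from the $W^{2,\alpha}$ data; the $\tfrac12$ arising in the Cauchy-Schwarz step is precisely what forces the threshold $\alpha>(q+1)/2$. A secondary delicacy is the Fubini/doubling reduction of Step 2, where the $t$-dependent fattenings $A_k^\ast(t)$ must be reconciled with the off-diagonal decay so that the $(r/t)^{4M}$ gain from atomic cancellation can be used for the large-$t$ regime and the $(2^kr/t)^{-\beta}$ decay for the small-$t$ regime.
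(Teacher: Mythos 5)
Your overall strategy is close in spirit to the paper's (which also rests on an $L^2$ off-diagonal estimate obtained from finite speed of propagation and Fourier inversion of an even function -- this is exactly Lemma \ref{dd1}), but there is a genuine gap in Step 1, and it propagates into the small-$t$ regime of Step 3. The off-diagonal bound
$\|\chi_F\,\Phi_t^{\langle N\rangle}(\mathcal L)\,\chi_E\|_{L^2\to L^2}\le C(1+\mathrm{dist}(E,F)/t)^{-\beta}$
cannot be achieved with $\beta=\alpha-1/2$ uniformly in $N$. Each dyadic piece $n_j$ lives at spatial scale $2^{-j/2}t$, so it contributes decay $(1+2^{j/2}\mathrm{dist}(E,F)/t)^{-(\alpha-1/2)}$ with norm $\min(2^{-j},2^{jN})$; summing over $j<0$ one finds that the total decay is capped at exponent $\min(\alpha-\tfrac12,\,2N)$, the cap $2N$ coming from the order of vanishing of $(t^2\lambda)^N$ at $\lambda=0$. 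This cap is sharp: for $m(\lambda)=\lambda^{i\gamma}$ (which satisfies (\ref{condition_st}) for every $\alpha$) the even profile $\xi^2e^{-\xi^2}|\xi|^{2i\gamma}$ of $\Phi_1^{\langle 1\rangle}$ is only $C^1$ at the origin and its Fourier transform tail integrates to $\sim R^{-2}$. Consequently, in the regime $t\le r$ where you apply Step 1 with $N=1$ directly to $a$, the best decay available is $(2^kr/t)^{-\min(\alpha-1/2,\,2)}$, and after Cauchy--Schwarz against $\mu(A_k)^{1/2}\lesssim 2^{kq/2}\mu(B)^{1/2}$ the sum over annuli requires $\min(\alpha-\tfrac12,2)>q/2$. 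For $q\ge 4$ this fails no matter how large $\alpha$ is, so your argument does not prove the theorem in general.

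The missing idea is that the atom's cancellation $a=\mathcal L^{2M}b$ must be deployed in \emph{frequency}, not merely in the time variable $t$: in the small-$t$ regime the dangerous contribution comes from spectral frequencies $\lambda\lesssim r^{-2}$, and for exactly those one should write $n_{j,t}(\mathcal L)a=n_{j,t}(\mathcal L)\mathcal L^{2M}b$ to gain the factor $2^{2Mj}\lesssim (r2^{j})^{2M}r^{-4M}$, which beats the weight precisely when $M>q/4$. This is how the paper organizes the proof: instead of estimating $S_h(m(\mathcal L)a)$ directly, Proposition \ref{prop_molec} decomposes $(r^2\mathcal L)^k\tilde b$ by the frequency cutoffs $\psi(2^{-j}\sqrt{\mathcal L})$ with the split at $j_0=-\log_2 r$ (see (\ref{LLL})--(\ref{FF})), uses $g_2=\mathcal L^kb$ together with the symbol $\lambda^{2M}\psi(\lambda)$ for $j<j_0$, and concludes that $m(\sqrt{\mathcal L})a$ is a $(1,2,M,\varepsilon)$-molecule, after which (\ref{molec3}) (i.e.\ \cite[Corollary 5.2]{Hof}) replaces your direct square-function estimate. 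Your Steps 2--3 could in principle be repaired by inserting this frequency splitting inside the $t\le r$ regime, but as written the proof is incomplete for $q\ge4$.
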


 Fix $\varepsilon >0$ and $M>q\slash 4$, $M\in\mathbb N$.
  We say that a function $\tilde a$ is a
 $(1,2,M,\varepsilon )$-molecule associated to $\mathcal L$ if
 there exist a function $\tilde b\in \mathcal D(\mathcal L^M)$ and
 a ball $B=B_d(y_0,r)$ such that
 \begin{equation}\label{molec1}
  \tilde a=\mathcal L^M \tilde b;
  \end{equation}
 \begin{equation}\label{molec2} \|(r^2\mathcal L)^k\tilde b\|_{L^2(U_jB))}
 \leq r^{2M }2^{-j\varepsilon }V(y_0, 2^jr)^{-1\slash  2}
 \end{equation}
for  $ k=0,1,...,M$,  $j=0,1,2,...$, where
$U_0=B$, $U_j(B)=B_d(y_0,2^jr)\setminus B_d(y_0, 2^{j-1}r)$ for $j\geq 1$.

 It was proved in \cite[Corollary 5.2] {Hof} that every
 $(1,2,M,\varepsilon)$-molecule $\tilde a$ belongs to $H^1_{\mathcal L}$ and
 \begin{equation}\label{molec3}
 \| \tilde a\|_{H^1_{\mathcal L}}\leq C_{\varepsilon,M}.
 \end{equation}

 Of course the condition (\ref{condition_st}) is invariant under the change of
 variable
 $\lambda \mapsto \lambda^s$ in  multipliers. Hence (\ref{eqqq2})
   will be established if we have proved the following
 proposition for $\sqrt{\mathcal L}$.

 \begin{propo}\label{prop_molec}
 Assume that $m$ satisfies (\ref{condition_st}). Fix $M>q\slash 4$,
 $M\in\mathbb N$. Then there exists $\varepsilon >0$ such that for
 every $(1,2,2M)$-atom $a$ for $H^1_{\mathcal L}$ the function
 $$ \tilde a(x) =m(\sqrt{\mathcal L})a(x)$$
 is a multiple of $(1,2,M, \varepsilon)$-molecule. The multiple
 constant is independent of $a$.
 \end{propo}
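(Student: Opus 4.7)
The plan is to take $\tilde b:=m(\sqrt{\mathcal L})\,\mathcal L^M b$. Then $\mathcal L^M\tilde b = m(\sqrt{\mathcal L})\mathcal L^{2M}b = m(\sqrt{\mathcal L})\,a=\tilde a$, which settles (\ref{molec1}). Rewriting $(r^2\mathcal L)^k\tilde b = r^{2k}\,\Phi_k(\sqrt{\mathcal L})b$ with $\Phi_k(\lambda):=\lambda^{2(k+M)}m(\lambda)$, the task reduces to
$$
\|\Phi_k(\sqrt{\mathcal L})\,b\|_{L^2(U_j(B))} \;\leq\; C\,r^{2(M-k)}\,2^{-j\varepsilon}\,V(y_0,2^j r)^{-1/2}\qquad (k=0,\ldots,M;\;j\geq 0)
$$
for some fixed $\varepsilon>0$. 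For $j\in\{0,1,2\}$ this is immediate from the $L^2$-boundedness of $\Phi_k(\sqrt{\mathcal L})$ (spectral theorem plus $\|m\|_\infty<\infty$), the atomic estimate $\|\mathcal L^{k+M}b\|_{L^2}\leq r^{2(M-k)}\mu(B)^{-1/2}$, and $V(y_0,2^j r)\leq C\mu(B)$ by doubling.

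For $j\geq 3$ I would decompose $\Phi_k=\sum_{n\in\mathbb Z}\Phi_{k,n}$ dyadically using the partition of unity $\psi$ of (\ref{ppsi}), with $\Phi_{k,n}(\lambda):=\psi(\lambda/2^n)\Phi_k(\lambda)$. The rescaled $\tilde\Phi_{k,n}(\mu):=\Phi_{k,n}(2^n\mu)$ is supported in $[1/2,2]$ and (\ref{condition_st}) gives $\|\tilde\Phi_{k,n}\|_{W^{2,\alpha}}\leq C\,2^{2n(k+M)}$. Two complementary estimates are then available for each dyadic piece. The \emph{finite propagation bound}: trading $l\in\{0,\ldots,2M\}$ powers of $\mathcal L$ onto $b$, write $\Phi_{k,n}(\sqrt{\mathcal L})b=F(\sqrt{\mathcal L})\mathcal L^l b$ with $F(\lambda):=\lambda^{-2l}\Phi_{k,n}(\lambda)$; extend $F$ evenly to $\mathbb R$, represent $F(\sqrt{\mathcal L})$ via the cosine transform $\frac{1}{\pi}\int\widehat{F^{\mathrm e}}(t)\cos(t\sqrt{\mathcal L})\,dt$, and truncate at $|t|\leq s_0:=2^{j-2}r$. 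By finite propagation speed of $\cos(t\sqrt{\mathcal L})$, which is equivalent to Davies--Gaffney (see \cite{CS,S2}), the truncation sends $b$ into $B_d(y_0,r+s_0)\subset B_d(y_0,2^{j-1}r)$ and hence vanishes on $U_j(B)$; the tail is bounded in $L^\infty$ by Cauchy--Schwarz from the Sobolev norm of the rescaled $\tilde F$, giving
$$
\|\Phi_{k,n}(\sqrt{\mathcal L})b\|_{L^2(U_j(B))}\leq C\,(2^n r)^{2(k+M-l)}(2^{n+j}r)^{-(\alpha-1/2)}\,r^{2(M-k)}\mu(B)^{-1/2}.
$$
The \emph{trivial $L^2$ bound}: $\|\Phi_{k,n}(\sqrt{\mathcal L})b\|_{L^2(U_j(B))}\leq C (2^n r)^{2(k+M)}r^{2(M-k)}\mu(B)^{-1/2}$.

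Setting $u:=2^n r$, the plan is to sum over $n$ by splitting into three regimes: for $u<2^{-j}$ the trivial $L^2$ bound is sharper; for $2^{-j}\leq u\leq 1$ use the finite propagation bound with $l=0$; for $u>1$ use the finite propagation bound with $l=2M$, absorbing the polynomial growth into $\|\mathcal L^{2M}b\|_{L^2}\leq\mu(B)^{-1/2}$. Each regime contributes a convergent geometric series; combining them yields
$$
\|\Phi_k(\sqrt{\mathcal L})\,b\|_{L^2(U_j(B))}\leq C\,r^{2(M-k)}\,2^{-j\min(\alpha-1/2,\,2(k+M))}\,\mu(B)^{-1/2}.
$$
Converting $\mu(B)^{-1/2}$ into $V(y_0,2^j r)^{-1/2}$ by doubling costs a factor $C\,2^{jq/2}$, so the molecule estimate (\ref{molec2}) follows with $\varepsilon:=\min(\alpha-1/2,\,2M)-q/2>0$. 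The two hypotheses enter symmetrically: $\alpha>(q+1)/2$ controls the Sobolev-tail rate and $M>q/4$ controls the atomic-regularity rate, both needed to strictly exceed $q/2$.

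The hard step will be organising this three-regime summation: neither estimate alone is summable in $n$ uniformly in $k$ (the trivial bound lacks $j$-decay, while the Sobolev tail diverges at very low frequencies), and the trade-off parameter $l$ has to be chosen differently in each regime so that the bounds agree up to constants at the crossovers $u\sim 2^{-j}$ and $u\sim 1$. What makes the whole scheme work is precisely that the input is a $(1,2,2M)$-atom rather than a $(1,2,M)$-atom: the extra $M$ powers of $\mathcal L$ sitting on $b$ produce the factor $(2^n r)^{2(k+M)}$ in the trivial bound, compensating the Sobolev-tail divergence at low frequencies, and simultaneously yield $\|\mathcal L^{2M}b\|_{L^2}\leq\mu(B)^{-1/2}$ for the high-frequency part.
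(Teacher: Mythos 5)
Your proposal is correct and follows essentially the same route as the paper: the same choice of $\tilde b=m(\sqrt{\mathcal L})\,\mathcal L^M b$, a dyadic frequency decomposition via the partition $\psi$, finite propagation speed of $\cos(t\sqrt{\mathcal L})$ to convert the $W^{2,\alpha}$ control of the rescaled dyadic pieces into off-ball decay, and a high/low-frequency trade of powers of $\mathcal L$ between the multiplier and the atom, with the doubling loss $2^{jq/2}$ absorbed by $\varepsilon>0$. The only organizational difference is that the paper packages the finite-propagation input as a single weighted $L^2$ estimate (Lemma \ref{dd1}) with weight exponent $\beta=q+2\varepsilon$ chosen just above $q$, which collapses your three-regime summation over $n$ into two regimes and produces the bounds on all annuli $U_j(B)$, $j\geq 0$, in one stroke.
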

 \begin{proof}
 Let $a$ be a $(1,2,2M)$-atom for $H^1_{\mathcal L}$
  and let $b$ and $B=B_d(y_0, r)$ be as
 in (\ref{A1})--(\ref{A3}). Set $\tilde b=m(\sqrt{\mathcal
 L})\mathcal L^M b$. Clearly, $\tilde a=\mathcal L^M\tilde b$.
 In order to complete the proof of  the proposition is suffices to verify
 (\ref{molec2}). To do this we need the following lemma.

 \begin{lema}\label{dd1}
 Let $\gamma >1\slash 2$, $\beta >0$. Then there exists a constant $C>0$ such that
 for every even function  $F\in W^{2,\gamma
 +\beta\slash 2}(\mathbb R)$ and every  $g\in
 L^2(\Omega)$, $\text{\rm supp}\, g\subset B_d(y_0, r)$,
 we have
  $$ \int_{d(x,y_0)>2r} |F(2^{-j}\sqrt{\mathcal
  L})g(x)|^2\left(\frac{d(x,y_0)}{r}\right)^\beta d\mu(x)\leq C
  (r2^j)^{-\beta}
  \| F\|_{W^{2,\gamma +\beta\slash 2}}^2\| g\|_{L^2(\Omega)}^2$$
  for $j\in\mathbb Z$.
 \end{lema}
 \begin{proof}[Proof of the lemma]
  The lemma seems to be well-known. For  the convenience of the reader we provide
  a proof. To  this end we borrow methods from  \cite{S}.
  Since $F$ is even,
 $$ F(2^{-j}\sqrt{\mathcal L})g=\frac{1}{2\pi}\int_{\mathbb R} \hat F(\xi)\cos
 (2^{-j}\xi\sqrt{\mathcal L})g\, d\xi,$$
 where $\hat F=\mathcal F F$ is the Fourier transform of $F$.
 The Davies-Gaffney estimates (\ref{D-G}) imply  the finite speed
  propagation of the wave equation $\mathcal L u +u_{tt}=0$ (see, e.g., \cite{S2}, \cite{CS}),
 which means that there exists a constant $C'>0$ that
  $$\text{supp}\, \cos (2^{-j}\xi \sqrt{\mathcal L})g\subset
 B_d(y_0, r+C' 2^{-j}|\xi|).$$
 Hence,
 \begin{equation}\begin{split}\nonumber
 & \left( \int_{d(x,y_0)>2r}  |F(2^{-j}\sqrt{\mathcal
  L})g(x)|^2\left(\frac{d(x,y_0)}{r}\right)^\beta
  d\mu(x)\right)^{1\slash 2}\\
  &=\left( \int_{d(x,y_0)>2r}  \left|\frac{1}{2\pi}
  \int_{\mathbb R}\hat F(\xi )\cos (2^{-j}\xi \sqrt{\mathcal
  L})g(x)\, d\xi \right|^2\left(\frac{d(x,y_0)}{r}\right)^\beta
  d\mu(x)\right)^{1\slash 2}\\
  &\leq C \int_{C2^{-j}|\xi|>2r} \left(\int_{2r<d(x,y_0)<C2^{-j}|\xi|}
  |\hat F(\xi)|^2|\cos (2^{-j}\xi \sqrt{\mathcal L})g(x)|^2 \frac{d(x,y_0)^\beta}{r^\beta}
  d\mu(x) \right)^{1\slash 2}d\xi\\
  &\leq C\int_{\mathbb R} |\hat F(\xi)|
  \left(\frac{2^{-j}|\xi|}{r}\right)^{\beta \slash 2}\| g\|_{L^2(\Omega)} \,
  d\xi\\
  &\leq C''(r2^j)^{-\beta\slash 2}
  \| F\|_{W^{2,\gamma +\beta\slash 2}}\| g\|_{L^2(\Omega )}.
 \end{split}\end{equation}
  \end{proof}
  We are now in a position to complete the proof of Proposition \ref{prop_molec}.
  Fix $\varepsilon >0$ and $\gamma >1\slash 2$ such that
   $\gamma +\varepsilon +q\slash 2=\alpha$.
   Set $\beta =q+2\varepsilon$.
  Then $\gamma +\beta\slash 2=\alpha$. Let $j_0=-\log_2 r$.
  For  an integer number $k$, $0\leq k\leq M$,  write
  \begin{equation}\label{LLL}
  \begin{split} (r^2\mathcal L)^k\tilde b&=r^{2k}\sum_{j\geq j_0}\psi (2^{-j}\sqrt{\mathcal L})
  m(\sqrt{\mathcal L})\mathcal L^{k+M}b +r^{2k}\sum_{j<j_0}\psi (2^{-j}\sqrt{\mathcal L})
  \mathcal L^M m(\sqrt{\mathcal L})\mathcal L^{k}b \\
  &=r^{2k}\sum_{j\geq j_0}\psi (2^{-j}\sqrt{\mathcal L})
  m(\sqrt{\mathcal L})g_1 +r^{2k}\sum_{j<j_0}\psi (2^{-j}\sqrt{\mathcal L})
  \mathcal L^M m(\sqrt{\mathcal L}) g_2,
  \end{split}\end{equation}
  where $g_1=\mathcal L^{k+M}b$, $g_2=\mathcal L^k b$.
 Since $a$ is a $(1,2,2M)$-atom for $\mathcal L$ associated with
 $B=B_d(y_0,r)$ and $b$ (see (\ref{A1})-(\ref{A3})), we have
 \begin{equation}\label{gg}
  \| g_1\|_{L^2(\Omega)}\leq r^{2M-2k}\mu(B)^{-1\slash 2}, \ \ \
  \| g_2\|_{L^2(\Omega)}\leq r^{4M-2k}\mu(B)^{-1\slash 2}.
 \end{equation}
  Put
  \begin{equation}
 F_j(\lambda)=\begin{cases}
 m(2^j\lambda )\psi (\lambda) \ \ & \text{for} \ j\geq
 j_0;\\
2^{2Mj}m(2^j\lambda)\lambda^{2M} \psi
  (\lambda) \ \ &\text{for} \ j<j_0;\\
  \end{cases}
  \end{equation}
 and extend each  $F_j$ to the even function.
 Clearly,
  \begin{equation}\label{FF}
  \|  F_j\|_{W^{2,\alpha}(\mathbb R)}\leq
  \begin{cases}
  C \ \ &\text{for} \ j\geq j_0;\\
 C 2^{2Mj} \ \ &\text{for}  \ j<j_0.
 \end{cases}\end{equation}
  Using Lemma \ref{dd1} combined with (\ref{LLL}) -- (\ref{FF}),  we get
  \begin{equation}\label{weight1}\begin{split}
 & \left(\int_{d(x,y_0)>2r}
  |(r^2\mathcal L)^k \tilde b(x)|^2
 \frac{d(x,y_0)^\beta}{r^\beta} d\mu (x)\right)^{1\slash
 2}\\
 & \ \ \ \ \  \leq Cr^{2k} \sum_{j\geq j_0} (r2^j)^{-\beta\slash 2} \|
 F_j\|_{W^{2,\alpha}(\mathbb R)} \| g_1\|_{L^2(\Omega)} \\
 & \ \ \ \ \ \ \ \ +  Cr^{2k} \sum_{j <j_0} (r2^j)^{-\beta\slash 2} \|
 F_j\|_{W^{2,\alpha}(\mathbb R)} \| g_2\|_{L^2(\Omega)}\\
 &\ \ \ \ \ \ \leq C r^{2M}\mu (B)^{-1\slash
 2}.
  \end{split}\end{equation}
 Moreover,
 \begin{equation}\label{weight2}\begin{split}
  \| (r^2\mathcal L)^k \tilde b\|_{L^2(\Omega)}& =\|
 r^{2k}m(\sqrt{\mathcal L}) \mathcal L^{k+M}b\|_{L^2(\Omega)}\\
 &\leq
 Cr^{2k} \| m\|_{L^\infty (\mathbb R)} \| g_1\|_{L^2(\Omega)}
 \leq C r^{2M}\mu(B)^{-1\slash 2}.
 \end{split}\end{equation}

Let $j\in\mathbb Z$, $j\geq 0$. Applying (\ref{weight1}) and
(\ref{weight2}) we obtain
\begin{equation}\begin{split}
 \| (r^2 \mathcal L)^k \tilde b\|_{L^2(U_j(B))}^2
 &  \leq C \int_{U_j(B)}
|(r^2 \mathcal L)^k \tilde b(x)|^2
\left(1+\frac{d(x,y_0)}{r}\right)^\beta 2^{-j\beta}d\mu(x)\\
&\leq C r^{4M}2^{-j\beta} \mu (B)^{-1}\\
&\leq C'' r^{4M} 2^{-2j\varepsilon} V(y_0, 2^{j}r)^{-1},
 \end{split}\end{equation}
 where in the last inequality we have  used (\ref{growth}).
 \end{proof}

 \section{Remarks}

 {\bf 1.} Assume that $-L$ generates a semigroup with Gaussian bounds.
  If we additionally assume that the space $(\Omega, d, \mu)$
 is such that for every $y\in \Omega$ there exists
 $\kappa=\kappa(y)$ and $c=c(y)>0$ such that
 \begin{equation}\label{lower}
 \mu (B_d(y, s))\geq cs^\kappa \ \ \text{for } s>1,
 \end{equation}
 then the multiplier operator $m(L)$ (see  (\ref{mult_op}))
 extends uniquely to a bounded operator on $H^1_L$. To see this we
 define the space $\mathfrak T$ of test functions in the following
 way: a function $g$ belongs to $\mathfrak T$ if there exist
 $t>0$, a ball $B_d(y,r)$, and   a function $\zeta \in L^\infty(\Omega)$ such
 that   $\text{supp}\, \zeta \subset B_d(y,r)$ and $g=T_t \zeta$.
 Wa say that $g_n$ converge to $g_0$ in $\frak T$ if there exist
 $t>0$, a ball $B=B_d(y,r)$, and functions $\zeta_n$ such
 that $\text{supp}\, \zeta_n \subset B$, $\sup \|
 \zeta_n\|_\infty <\infty$, $g_n=T_t\zeta_n$,  and $\zeta_n(x)\to \zeta_0(x)$ a.e.
 Clearly, $\frak T\subset L^p(\Omega)$ for every $1\leq p\leq
 \infty$. One can easily prove that if $f\in L^1(\Omega)$ is such
 that $\int_\Omega fg\, d\mu=0$ for every $g\in \frak T$, then
 $f=0$.

 \begin{lema}\label{distribution}
 Assume that $m$ satisfies (\ref{condition}). Then $\bar m(L)$ maps
 continuously $\frak T$ into $L^\infty(\Omega)$.
 \end{lema}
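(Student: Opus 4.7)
The plan is to factor $\bar m(L)T_t$ through the semigroup and exploit that $T_{t/2}$ sends $L^2$ into $L^\infty$ pointwise via the Gaussian bound (\ref{G1}). Given $g = T_t\zeta\in\mathfrak T$ with $\text{supp}\,\zeta\subset B=B_d(y_0,r_0)$, I would write
\[
\bar m(L)g \;=\; T_{t/2}\bigl(\bar m(L)T_{t/2}\zeta\bigr),
\]
which is legitimate because $\bar m(L)$ and $T_{t/2}$ are both Borel functions of the same self-adjoint operator $L$ and therefore commute. The hypothesis (\ref{condition}) in particular forces $m\in L^\infty(\mathbb R)$, so by the spectral theorem $\bar m(L)$ is bounded on $L^2(\Omega)$ with operator norm at most $\|m\|_\infty$.

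Setting $h=\bar m(L)T_{t/2}\zeta$, I would first bound
\[
\|h\|_{L^2(\Omega)} \;\le\; \|m\|_\infty\,\|T_{t/2}\zeta\|_{L^2} \;\le\; \|m\|_\infty\,\mu(B)^{1/2}\,\|\zeta\|_\infty.
\]
Then I would estimate $T_{t/2}h$ pointwise by Cauchy--Schwarz: combining the semigroup identity $T_t(x,x)=\int|T_{t/2}(x,z)|^2\,d\mu(z)$ with (\ref{G1}) at $y=x$ yields $\|T_{t/2}(x,\cdot)\|_{L^2}^2\le C/V(x,\sqrt{t})$, whence
\[
|\bar m(L)g(x)| \;=\; |T_{t/2}h(x)| \;\le\; \|T_{t/2}(x,\cdot)\|_{L^2}\|h\|_{L^2} \;\le\; \frac{C\,\|m\|_\infty\,\mu(B)^{1/2}\,\|\zeta\|_\infty}{\sqrt{V(x,\sqrt{t})}}.
\]

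At this stage I would invoke the standing assumption (\ref{lower}) of Section~5: combined with the doubling property (\ref{growth}) it provides a positive lower bound on $V(x,\sqrt{t})$ that is uniform in $x\in\Omega$ (for $\sqrt{t}>1$ directly from (\ref{lower}); for $\sqrt{t}\le 1$ via $V(x,1)\le C\,t^{-q/2}V(x,\sqrt{t})$ from (\ref{growth}) and $V(x,1)\ge c$ from the $s\to 1^+$ limit in (\ref{lower})). This gives $\|\bar m(L)g\|_{L^\infty}\le C_{t,B}\|\zeta\|_\infty$. Continuity of $\bar m(L)\colon\mathfrak T\to L^\infty(\Omega)$ then drops out automatically: if $g_n=T_t\zeta_n\to g_0=T_t\zeta_0$ in $\mathfrak T$, applying the same estimate to $\zeta_n-\zeta_0$ gives
\[
\|\bar m(L)(g_n-g_0)\|_{L^\infty} \;\le\; C_{t,B}\,\|\zeta_n-\zeta_0\|_{L^2},
\]
and the right-hand side tends to $0$ by dominated convergence on the fixed finite-measure ball $B$.

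The delicate point is the uniform lower bound on $V(x,\sqrt{t})$: this is precisely the role of the hypothesis (\ref{lower}) introduced at the start of the remark. Without it, the semigroup factorization would still give the same pointwise estimate, but $\bar m(L)g$ could fail to be essentially bounded, which is why this extra assumption was imposed in this section and not in the main body of the paper.
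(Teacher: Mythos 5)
Your factorization $\bar m(L)T_t=T_{t/2}\bigl(\bar m(L)T_{t/2}\zeta\bigr)$ and the pointwise bound $|T_{t/2}h(x)|\le C\,V(x,\sqrt t)^{-1/2}\|h\|_{L^2}$ are fine, but the step where you claim that (\ref{lower}) together with doubling yields a lower bound on $V(x,\sqrt t)$ \emph{uniform in $x\in\Omega$} is a genuine gap. In (\ref{lower}) the constants are explicitly $\kappa=\kappa(y)$ and $c=c(y)$: the hypothesis is pointwise in the center of the ball and gives no uniformity over $\Omega$. Doubling cannot upgrade it: for example $\mathbb R$ with the doubling measure $d\mu=(1+|x|)^{-1/2}dx$ satisfies (\ref{lower}) at every point (with $c(y)\sim(1+|y|)^{-1/2}$) while $\inf_{x}\mu(B_d(x,1))=0$. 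Since your final estimate is $|\bar m(L)g(x)|\le C\,V(x,\sqrt t)^{-1/2}\|m\|_\infty\mu(B)^{1/2}\|\zeta\|_\infty$ with $x$ ranging over \emph{all} of $\Omega$, the conclusion $\bar m(L)g\in L^\infty(\Omega)$ does not follow. (A further warning sign: your argument uses only $m\in L^\infty$ and none of the smoothness in (\ref{condition}), so if it worked it would prove the lemma for arbitrary bounded Borel $m$.)

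The paper's proof is arranged precisely so that the volume factor sits at a point of the \emph{fixed} ball $B=B_d(y_0,r)$ supporting $\zeta$, where the $y$-dependence of (\ref{lower}) is harmless. Writing $\bar m(L)g=\sum_j\bar n_j(L)\zeta$ with $n_j(\lambda)=m(\lambda)\psi(2^{-j}\lambda)e^{-t\lambda}$, Theorem \ref{hebisch} plus scaling gives the kernel bound $|n_j(L)(x,y)|\le C\,\mu(B_d(y,2^{-j/2}))^{-1}\|n_j(2^j\cdot)\|_{W^{\infty,\alpha}(\mathbb R)}$ — note the ball is centered at $y\in B$, not at $x$. The sum over $j$ is then controlled using (\ref{lower}) only at $y_0$ (for the large scales $2^{-j/2}\ge r$) and (\ref{growth}) (for the small scales), giving a bound $C(y_0,r)$ uniform in $x\in\Omega$ and $y\in B$. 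This is where the Sobolev hypothesis (\ref{condition}) is actually used. If you want to salvage your route, you would need to replace the crude $L^2\to L^\infty$ bound for $T_{t/2}$ by an estimate that keeps the volume anchored at the support of $\zeta$, e.g.\ $|T_{t/2}h(x)|\le\int|T_{t/2}(x,z)||h(z)|\,d\mu(z)$ combined with a weighted $L^2$ bound on $h$ localizing it near $B$ — but that again requires off-diagonal kernel information about $\bar m(L)$ of the type Theorem \ref{hebisch} provides, i.e.\ essentially the paper's argument.
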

 \begin{proof}
 Recall that $\alpha >q\slash 2$. Observe that there exists a constant
 $C>0$ such that for every function $n(\lambda)$ such that $n\in
 W^{\infty, \alpha}(\mathbb R)$, $\text{supp}\, n\subset (2^{j-1},
 2^{j+1})$, one has
 \begin{equation}\label{eeq1}
 |n(L)(x,y)|\leq C
 \mu(B_d(y,2^{-j\slash 2}))^{-1}\| n(2^{j}\, \cdot\,)\|_{W^{\infty, \alpha}(\mathbb R)}.
 \end{equation}
 It suffices to prove (\ref{eeq1}) for $j=0$ and then use the scaling
 argument. Set $\xi(\lambda)=e^\lambda n(\lambda )$. Then $\|
 \xi\|_{W^{\infty, \alpha}(\mathbb R)}\sim\| n\|_{W^{\infty , \alpha}(\mathbb R)}$.
 Hence,
 by Theorem \ref{hebisch},
 $$ |n(L)(x,y)|\leq \int |\xi (L)(x,z)T_1(z,y)|\, d\mu (z)\leq C\mu(B_d(y, 1))^{-1}
 \|
 n\|_{W^{\infty,\alpha}(\mathbb R) }.$$
 Assume that $g\in \frak T$. Then there are $t>0$, $B=B_d(y_0, r)$,
 and a bounded function $\zeta$ such that $g=T_t\zeta$,
 $\text{supp}\, \zeta \subset B$. Of course we can assume that
 $r>1$.
 Let $\psi$ be as in (\ref{ppsi}). Let $n_j(\lambda)=m(\lambda)
 \psi(2^{-j}\lambda) e^{-t\lambda}$. Then
 $$ \bar m(L)g(x)=\sum_j \bar n_j(L)\zeta (x)=\sum_j
  \int \bar n_j(L)(x,y) \zeta (y)\, d\mu(y).$$
 Set $j_0=-2\log_2 r$.  Obviously $\| \bar n_j (2^{j}\, \cdot \, )\|_{W^{\infty,
 \alpha}(\mathbb R)}\leq C e^{-c2^{j}t}$. Thus
 \begin{equation}\label{eeqq2}
 \sum_{j} |\bar n_j(L)(x,y)|\leq C \sum_{j\leq j_0} \mu (B_d(y,2^{-j\slash
 2}))^{-1} +C\sum_{j>j_0} e^{-ct2^j}\mu(B_d(y,2^{-j\slash
 2}))^{-1}.
 \end{equation}
 By (\ref{lower}) there exist  $c(y_0)$ and $\kappa=\kappa(y_0)>0$
 such that for $y\in B_d(y_0,r)$ and $j\leq j_0$ we have
 \begin{equation}\label{eeqq3}
 \mu(B_d(y,2^{-j\slash 2})) \sim \mu( B_d(y_0,2^{-j\slash 2}))
 \geq c(y_0)2^{-j\kappa\slash 2}.
 \end{equation}
 On the other hand, by (\ref{growth}), for $y\in B_d(y_0,r)$ and $j>j_0$, we have
 \begin{equation}\label{eeqq4}
\mu (B_d(y_0, r))\sim \mu (B_d(y, r))\leq C(2^{j\slash
2}r)^q\mu(B_d(y, 2^{-j\slash 2})).
 \end{equation}
 From (\ref{eeqq2})-(\ref{eeqq4}) we conclude that there exists a
 constant $C(y_0,r)$ such that
 $$ \sum_{j} |\bar n_j(L)(x,y)|\leq C(y_0,r) \ \ \ \text{for } \
 x\in \Omega \ \ \text{and}\ y\in B_d(y_0, r).$$
 \end{proof}
 We are now in a position to define the action of $m(L)$ on the
 space $L^1(\Omega)$ in the weak (distributional) sense by putting
 $$\langle m(L)f,g\rangle = \int_\Omega f(x) \overline{\bar m(L)  g(x)}\,
 d\mu (x).$$
  Let us observe  that $m(L)$ is uniquely defined on $H^1_{L}$.
 Indeed, if $f=\sum_j \lambda_j a_j$, where $a_j$ are
 $(1,2,1)$-atoms, $\lambda_j\in\mathbb C$, $\sum
 |\lambda_j|\sim \| f\|_{H^1_{L}}<\infty$ then, by Theorem \ref{maintheorem} and Lemma
 \ref{distribution},  for every $g\in \frak T$ we have
 \begin{equation}\begin{split}\nonumber
 \langle m(L) f,g\rangle & =
 \int_\Omega \Big(\sum_j \lambda_j a_j
 (x)\Big) \overline{\bar m(L) g(x)}\, d\mu(x)\\
 &=\sum_j \lambda_j \int_\Omega a_j
 (x) \overline{\bar m(L) g(x)}\, d\mu(x)\\
 &= \sum_j \lambda_j \int_\Omega
 m(L)a_j (x) \overline{g(x)}\, d\mu (x)\\
&= \int_\Omega \Big( \sum_j \lambda_j
 m(L)a_j (x)\Big)\overline{ g(x)}\, d\mu (x).\\
 \end{split}\end{equation}
Since $\sum_j \lambda_jm(L)a_j$ belongs to $L^1(\Omega)$, we
obtain that $m(L)f=\sum_j \lambda_jm(L)a_j$, which gives the
required uniqueness. Obviously, $\| m(L)f\|_{H^1_L}\leq C\|
f\|_{H^1_L}.$

 \

 {\bf 2.} One of distinguished examples of   semigroups of linear operators
 with
 Gaussian bounds is that generated by a Schr\"odinger operator
  $-A=\Delta -V$ on $\mathbb R^d$,
where $V $ is a nonnegative potential  such that $V\in
L^1_{loc}(\mathbb R^d)$. By  the Feynman-Kac formula the integral
kernels  $p_t(x,y)$  of the semigroup $e^{-tA}$ satisfy
$$ 0\leq p_t(x,y)\leq (4\pi t)^{-d\slash 2}\exp(-|x-y|^2\slash
4t).
$$
 Clearly, considering $(\mathbb R^d, \, d(x,y)=|x-y|, \, dx)$ as a
 space of homogeneous type, we have that (\ref{growth}) and (\ref{lower}) hold with
 $q=d$. Thus, as a corollary of Theorem \ref{maintheorem}, we obtain
 that  any bounded function $m:(0,\infty)\to \mathbb C$ which
 satisfies (\ref{condition}) with $\alpha> d\slash 2$ is an
 $H^1_A$ spectral  multiplier for $A$.

 We would like to remark that the space $H^1_A$ admits also
 characterization by means of maximal function from the semigroup
 $e^{-tA}$
 (see \cite{Hof}). Using arguments similar to those  of \cite{DZ4}
 one can prove the spectral multiplier theorem on Hardy spaces
 associate with the Schr\"odinger operators by applying both atomic
 and maximal function characterizations.

 Another  molecule decomposition of Hardy space $H^1$ associated with
semigroups generated by Schr\"odinger operators was communicated
to us by Jacek Zienkiewicz \cite{Zien}. These decompositions also
lead to  multiplier theorems.

\

{\bf Acknowledgment.} The authors would like to
 thank  Pascal Auscher, Fr\'ed\'eric Bernicot, and  Pawe\l \
G\l owacki for their valuable comments.
%%%%%%%%%%%%%%%%%%%%%%%%%%%   References   %%%%%%%%%%%%%%%%%%%%%%%%%%%%%

\end{document}